\documentclass[journal,twoside,web]{ieeecolor}

\usepackage{generic}
\usepackage{cite}
\usepackage{amsmath,amssymb,amsfonts}
\usepackage{algorithmic}
\usepackage{graphicx}
\usepackage{textcomp}
\usepackage{subfigure}
\usepackage{epsfig} 
\usepackage{cite}
\usepackage{lcsys}

\usepackage[utf8]{inputenc}
\usepackage[T1]{fontenc}
\usepackage{amsthm}
\usepackage{mathrsfs}
\usepackage{mathtools}
\usepackage[short]{optidef}
\usepackage{textcmds} % for quotation marks
\usepackage{xcolor}
\usepackage{xspace}

\usepackage{tikz}
\tikzstyle{round} = [circle, minimum size=0.25cm, text centered, draw=black, thick]
\tikzstyle{arrow} = [thick,->,>=stealth]
\tikzset{weightnode/.style n args={3}{%
  circle,
  draw,
  inner sep=2.5mm,
  append after command={%
  \pgfextra{ %
   \draw (\tikzlastnode.center) -- (\tikzlastnode.south) ;
   \draw (\tikzlastnode.west)   -- (\tikzlastnode.east) ;
   \path (\tikzlastnode.center) -- node[black] {\footnotesize #1} (\tikzlastnode.north);  % node label
   \path (\tikzlastnode.center) -- node[myblue] {\footnotesize #2} (\tikzlastnode.south west); % cost
   \path (\tikzlastnode.center) -- node[myorange] {\footnotesize #3} (\tikzlastnode.south east); } }} % bound
}

\newtheorem{theorem}{Theorem}
\newtheorem{lemma}{Lemma}
\newtheorem{proposition}{Proposition}

\theoremstyle{definition}
\newtheorem{assumption}{Assumption}

\theoremstyle{remark}
\newtheorem{remark}{Remark}
\newtheorem{example}{Example}

\definecolor{myblue}{HTML}{1F77B4}
\definecolor{myorange}{HTML}{FF7F0E}
\definecolor{mygreen}{HTML}{2CA02C}
\definecolor{myred}{HTML}{D62728}
\definecolor{mypurple}{HTML}{9467bd}

\newcommand*{\ie}{i.e.,\@\xspace}
\newcommand*{\eg}{e.g.,\@\xspace}
\newcommand*{\cf}{cf.\@\xspace}

\newcommand{\R}{\mathbb{R}}
\newcommand{\N}{\mathbb{N}}
\newcommand*{\Matrix}[1]{\ensuremath{ \begin{pmatrix} #1 \end{pmatrix} }} 	% matrix

\begin{document}

\title{Model Predictive Control for Constrained Linear Positive Systems on Graphs}

\author{
Roland Schurig$^{a}$,
David Ohlin$^{b}$,
Anders Rantzer$^{b}$,
Emma Tegling$^{b}$,
Rolf Findeisen$^{a}$
\thanks{The authors acknowledge support by the LOEWE initiative emergenCITY and by the Wallenberg AI, Autonomous Systems and Software Program (WASP) funded by the Knut and Alice Wallenberg Foundation.}
\thanks{$^{a}$ TU Darmstadt, Control and Cyber-Physical Systems Laboratory, \{roland.schurig,rolf.findeisen\}@iat.tu-darmstadt.de}
\thanks{$^{b}$  Department of Automatic Control, Lund University. The authors are with the ELLIIT Strategic Research Area at Lund University. \{david.ohlin,anders.rantzer,emma.tegling\}@control.lth.se}
}

\maketitle
\thispagestyle{empty}

%%%%%%%%%%%%%%%%%%%%%%%%%%%%%%%%%%%%%%%%%%%%%%%%%%%%%%%%%%%%%%%%%%%%%%%%%%%%%%%%
\begin{abstract}
Positive systems describing networks with inherently non-negative states and inputs arise naturally in routing, logistics, and compartmental modelling. We consider problems modelled as positive linear systems {in incidence form} with linear cost. {The addition of} capacity constraints on states (storage) and inputs (flows between nodes) significantly increases the problem complexity. 
Leveraging the analytic structure of the unconstrained problem, an explicit suboptimal admissible controller is constructed. {This yields} graph-computable performance bounds and a minimum stabilising horizon length {for a model predictive controller without terminal conditions}. A convex program enables efficient computation of the optimal bound and horizon. These results highlight how system structure enables explicit MPC guarantees that are typically not available.
\end{abstract}

\begin{IEEEkeywords}
Compartmental and Positive systems, Predictive control for linear systems, Optimal control
\end{IEEEkeywords}

\section{Introduction}
\label{sec:introduction}

 \noindent
 \IEEEPARstart{W}{e} consider optimal control of {a subclass of} positive systems on graphs under capacity constraints on vertex storage (states) and edge flows (inputs), precluding known explicit solutions. 
 {This motivates a model predictive control (MPC)} approach without terminal ingredients that exploits the {positivity and special structure of the dynamics} to derive explicit, computable horizon bounds for closed-loop stability.

{Infinite-horizon} optimal control of positive systems has recently received increased attention. 
Recent work has established closed-form solutions, \cite{Blanchini2023},~\cite{Rantzer2022}, with extensions to optimal control of networks in~\cite{Ohlin2024},~\cite{Ohlin2025}. These works do not consider capacity constraints on states and inputs, which arise naturally in practical routing problems. Addressing these constraints is the core contribution of this work, as they render previous closed-form solutions inapplicable.

The key enabling property of the considered system class is the existence of a parametrised family of stabilising controllers derived from the optimal static policy in~\cite{Ohlin2024}, which can be made admissible under capacity constraints. This structure allows us to construct a suboptimal but feasible feedback law that guarantees constraint satisfaction.

Building on this construction, we derive explicit lower bounds on the prediction horizon required for stability of the MPC closed loop, as well as closed-form performance bounds, in the spirit of~\cite{Gruene2008}. Moreover, the sparse structure of the resulting controller class enables the computation of these bounds via a convex geometric program~\cite{Boyd2004}. Such explicit and computationally tractable horizon bounds are generally not available for constrained linear systems, and arise here as a direct consequence of the positivity and graph structure.

A large literature stemming from the foundational work of Kantorovich~\cite{Kantorovich1960} has explored resource allocation %and inventory management 
through {the lens of routing problems on graphs}. Such formulations typically consider static flow, modelled as supply and demand. In contrast, the scenario considered here is %concerned with 
the transient dynamics; locating all commodity in a single goal vertex, starting from some initial distribution. When no capacity constraints are present, this formulation falls into the class of shortest path problems. Starting from the initial work of Dijkstra~\cite{Dijkstra1959}, numerous algorithms have been devised to treat this problem. The typical approach (see \eg~\cite{Barto1995}) is to model the state as a probability distribution over the vertex set, a setting that does not naturally lend itself to fixed capacity constraints.  

{The previous work~\cite{Moss1982}, seeking to treat graph routing using the theory of optimal control, defines a linear program routing all commodities to the goal in finite time. 
In the present setting, however, it is unclear how long this horizon needs to be to ensure feasibility, or what level of performance can be guaranteed compared to the infinite-horizon case.}
For this reason, we use online optimisation {in an MPC scheme, which is well established for constrained systems \cite{Gruene2017,Borrelli2017}.%\cite{Gruene2017,Borrelli2017,lucia2016predictive}.
} 

Standard MPC methods use terminal conditions to ensure stability of the closed loop; see \eg \cite{Gruene2017, Borrelli2017,Chen1998}. 
This yields a simplified analysis but more conservative suboptimality estimates. A differing approach (see \eg~\cite[Chapter~6]{Gruene2017}) removes the terminal conditions and instead bounds the optimal value function on the state space, giving global performance guarantees. 
This is the approach adopted. 
{The main contribution of this work is a structured approach to designing an MPC scheme without terminal conditions, but with guaranteed stability and infinite-horizon performance bounds for a specific class of constrained positive systems on graphs.}
{These bounds are explicit and computable} via a convex optimisation method, {which is generally 
not possible} for broader system classes~\cite{Boccia2014}. 
{We exploit the incidence structure of the system dynamics, the box-type capacity constraints and the optimal routing structure for the unconstrained case, as inherited from \cite{Ohlin2024}.} 

{
    A routing problem similar to our convex program was considered in \cite{Arneson2009}. 
    The authors propose a sufficient condition for constraint satisfaction, which is further relaxed to obtain a linear program. 
    In contrast, we work with necessary and sufficient conditions, which lead to a geometric program.
}

Several works have considered MPC approaches for positive systems and systems on graphs.  
In~\cite{Braun2003}, an MPC framework is developed for general dynamics and costs under network constraints. 
For uncertain positive systems,~\cite{Zhang2018} proposes a model predictive approach with linear performance index, which is the objective function studied in this work. 
The recent work~\cite{Lejarza2021} uses tube-based MPC to control a class of noisy linear system with graph constraints, also using linear cost. 
The previous work that most closely resembles the present is~\cite{Mehrivash2019}. 
It takes a similar approach to the one presented here, but for a different class of positive systems. 
Our choice of system class is motivated by the simple analytic expressions available for the optimal cost and controller in the unconstrained case. It is precisely this that allows for the derivation of concrete bounds on horizon length and performance, which are not present in the mentioned prior studies. 

\textit{Notation:} To denote non-negative real numbers, we use $\R_{\geq 0} \coloneqq \{ x \in \R \mid x \geq 0 \}$, and analogously $\R_{> 0}$ for the positive real numbers. 
The positive integers are $\mathbb{N}$, and ${\mathbb{N}_0 \coloneqq \mathbb{N} \cup \{ 0 \}}$. 
The $i$-th canonical basis vector in $\R^n$ is $e_i$. %, where the dimension $n$ will be clear from context.
Inequalities are applied element-wise for elements of $\R^n$, and the vector of all ones in $\R^n$ is $\mathbf{1}$. 

\section{Problem setup}

\noindent
We consider a special class of linear positive systems: 
\begin{equation}\label{eq:dynamics}
    x(t+1) = x(t) + Bu(t)
\end{equation}
with state space ${\mathbb{R}_{\geq 0}^n}$ and control space ${\R_{\geq 0}^{m}}$. This can be interpreted as dynamics over a directed graph: 
Take the vertex set ${\mathcal{V} = \{1, \dots, n\}}$, each representing a buffer that can store a commodity. 
The goal is to transport all commodities present in the network to a goal vertex $g$, which we denote ${g = n+1}$.
Commodities are routed through the network via the edges encoded in the incidence matrix 
\begin{equation} \label{eq:B}
	B = \Matrix{B_1 & \dots & B_n},
\end{equation}
with $B_i \in \R^{n \times m_i}$, so that ${\sum_i m_i = m}$. 
Each column of the local incidence matrix $B_i$ has an element $-1$ on the $i$-th row and 1 on the row corresponding to a \textit{successor} vertex $j$, if $j \in \mathcal{V}$. 
This edge transports commodity from $i$ to $j$. 
Any edge leading to the goal vertex $g$ is modelled as a column of~$B$ with only a single non-zero element~$-1$ (see Example~1). 
This edge removes commodity from the network.

If vertex $i \in \mathcal{V}$ has more than one outgoing edge, the columns in $B_i$ are ordered according to the label of the respective successor vertex ${j \in \mathcal{V} \cup \{n+1\}}$. %\roland{Is that formulation clear?}
Locating all commodity in the goal vertex is represented in~\eqref{eq:dynamics} by the state $x = 0$. 
Given a state ${x \in \R_{\geq 0}^n}$, the control value must be contained in the set
\begin{equation}\label{eq:U(x)}
	U(x) \coloneqq \{ (u_1, \dots, u_n) \in \R_{\geq 0}^m \mid \forall i \in \mathcal{V} \colon \mathbf{1}^\top u_i \leq x_i  \} ,
\end{equation}
where $x_i$ denotes the $i$-th element of $x$, and the partition of $u$ follows the partition of $B$. 
{This constraint ensures that the state remains nonnegative and prevents more commodities being taken out of a vertex than are currently present.}

\begin{example} \label{ex:incidence_matrix}
    Consider the graph in Fig.~\ref{fig:shortest_path_problem}. %, and ignore the coloured edges for now. 
    The goal vertex is $n+1=6$, so the corresponding incidence matrix becomes
    \begin{equation*}
        B = \left(\begin{smallmatrix}
            -1 & -1 & -1 & 0 & 0 & 0 & 0 & 0 & 0 \\
            1 & 0 & 0 & -1 & -1 & -1 & 0 & 0 & 0 \\
            0 & 0 & 0 & 1 & 0 & 0 & -1 & 1 & 1 \\
            0 & 1 & 0 & 0 & 1 & 0 & 0 & -1 & 0 \\
            0 & 0 & 1 & 0 & 0 & 1 & 0 & 0 & -1
        \end{smallmatrix}\right) . 
    \end{equation*}
\end{example}

\begin{figure}[t]
    \centering
    \begin{tikzpicture}[node distance=1.5cm]
        %Nodes
        \node at (0,0) (n1) [round] {$1$};
        \node (n2) [round, right of=n1] {$2$};
        \node (n3) [round, right of=n2] {$3$};
        \node (n4) [round, above of=n2, yshift=-0.5cm] {$4$};
        \node (n5) [round, below of=n2, yshift=0.5cm] {$5$};
        \node (n6) [round, right of=n3] {$6$}; % goal node
        
        % edges
        % leaving #1
        \draw[arrow,mygreen] (n1) -- (n2); % optimaö
        \draw[arrow] (n1) -- (n4);
        \draw[arrow] (n1) -- (n5);
        % leaving #2
        \draw[arrow,mygreen] (n2) -- (n3); % optimal
        \draw[arrow] (n2) -- (n4);
        \draw[arrow] (n2) -- (n5);
        % leaving 3
        \draw[arrow,mygreen] (n3) -- (n6); % optimal
        % leaving #4
        \draw[arrow,mygreen] (n4) -- (n3); % optimal
        % leaving #5
        \draw[arrow,mygreen] (n5) -- (n3); % optimal
    \end{tikzpicture}
    \caption{
      Graph with $n=5$ vertices and goal vertex $g=6$. Coloured 
edges indicate inputs used in the optimal unconstrained 
solution. Example~1 uses costs $s^\top\!= (10 \; 5 \; 1 
\; 3 \; 2)$, $r^\top\! = (1 \; 5 \; 5 \; 1 \; 1 \; 1 \; 
1 \; 1 \; 1)$ and capacity constraints $\bar{x} = 
\mathbf{1}$, $\bar{u}^\top\! = (\frac{1}{4}\;\frac{1}{4}\;
 \frac{1}{4}\;\frac{1}{4}\;\frac{1}{4}\;\frac{1}{4}\;1\;1\;1)$.}
    \vspace*{-5mm}
    \label{fig:shortest_path_problem}
\end{figure}

\noindent {\it Optimal Control Problem:} 
This work concerns the optimal control of the presented system class. 
The nature of the considered optimal control problem is specified below.

In addition to the positivity constraints~\eqref{eq:U(x)}, we impose constraint sets $\mathbb{X} \subseteq \R_{\geq 0}^n$ and $\mathbb{U} \subseteq \R_{\geq 0}^m$. 
We assume that both sets contain the origin.
The states $x \in \mathbb{X}$ are called \emph{admissible}, and for each $x \in \mathbb{X}$, the control values 
\begin{equation*}
    u \in \mathbb{U}(x) \coloneqq U(x) \cap \mathbb{U}    
\end{equation*}
are called {admissible control values (for $x$)}. 
For ${N \in \N}$, a {finite control of length $N$} is a map ${u \colon \{0,\dots,N\!-\!1\} \to \R_{\geq 0}^m}$.
The set of all finite controls of length $N$ is denoted by $U^N\!$. 
Similarly, an {infinite control} is a map ${u \colon \N_0 \to \R_{\geq 0}^m}$, and the set of all infinite controls is denoted $U^\infty\!\!$. 
Given an initial state ${x_0 \in \R_{\geq 0}^n}$ and a control ${u \in U^N}$ or ${u \in U^\infty}\!\!$, the {trajectory} $x_u(\cdot,x_0)$ is created iteratively via $x_u(0,x_0) \coloneqq x_0$ and
\begin{equation*}
	\forall t \in I \colon x_u(t+1,x_0) \coloneqq x_u(t,x_0) + B u(t) ,
\end{equation*}
with $I = \N_0$ if $u \in U^\infty$ and $I = \{0,\dots,N-1\}$ otherwise. 

A {feedback law} is a map ${\mu \colon \R_{\geq 0}^n \to \R_{\geq 0}^m}$.
For an initial state ${x_0 \in \R_{\geq 0}^n}$, the {closed-loop trajectory} $x_\mu(\cdot,x_0)$ is defined iteratively via $x_\mu(0,x_0) = x_0$ and
\begin{equation*}
	\forall t \in \N_0 \colon x_\mu(t+1,x_0) = x_\mu(t,x_0) + B \mu(x_\mu(t,x_0)) .
\end{equation*}

Given a prediction horizon $N \in \N$ and an initial state $x_0 \in \mathbb{X}$, we call a control $u \in U^N$ and the corresponding trajectory $x_u(\cdot,x_0)$ {admissible for $x_0$ up to time $N$} if for each $t \in \{ 0, \dots, N-1 \}$, we have
\begin{equation*}
    x_u(t,x_0) \in \mathbb{X} \land u(t) \in \mathbb{U}(x_u(t,x_0)) \land x_u(N,x_0) \in \mathbb{X}.
\end{equation*}
The set of all admissible controls of length $N$ is denoted by $\mathbb{U}^N(x_0)$. 
Due to the structure of~\eqref{eq:U(x)}%the linear positive system
, for each $x \in \mathbb{X}$, the admissible set $\mathbb{U}(x)$ contains the zero vector, which in particular implies for each $N \in \mathbb{N}$ that $\mathbb{U}^N(x) \neq \emptyset$. 
An infinite control ${u \in U^\infty}$ is called {admissible for ${x_0 \in \mathbb{X}}$} if for each ${N \in \N}$, its restriction ${\left. u \right|_{[0,N)}}$ is admissible for $x_0$ up to time $N.$ 
The set of admissible controls for $x_0$ is denoted by $\mathbb{U}^\infty(x_0)$. 
Finally, a feedback law $\mu$ is called {admissible} if $x \in \mathbb{X}$ implies $\mu(x) \in \mathbb{U}^1(x)$.

We introduce a \emph{linear} stage cost ${\ell \colon \R_{\geq 0}^n \times \R_{\geq 0}^m \to \R_{\geq 0}}$, which takes the form
\begin{equation*}
    \ell(x,u) \coloneqq s^\top x + r^\top u ,
\end{equation*}
with given weights $s \in \R_{>0}^n$ and $r \in \R_{\geq0}^m$. 
The cost over a horizon $N \in \N$ for an initial state $x_0 \in \mathbb{X}$ and an admissible control $u \in \mathbb{U}^N(x_0)$ of length $N$ is then defined as
\begin{equation*}
    J_N(x_0,u) \coloneqq \sum_{t=0}^{N-1} \ell(x_u(t,x_0),u(t)) .
\end{equation*}
Based on this, we introduce the \emph{optimal value function} $V_N$ that minimises for a given ${x_0 \in \mathbb{X}}$ the cost $J_N(x_0,\cdot)$ over the \emph{admissible} controls of length $N$, \ie we introduce
\begin{equation} \label{eq:value_function_finite}
	V_N(x_0) \coloneqq \inf \{ J_N(x_0,u) \mid u \in \mathbb{U}^N(x_0) \} .
\end{equation}
If $u \in \mathbb{U}^N(x_0)$ satisfies $V_N(x_0) = J_N(x_0,u)$, we say that~$u$ is {an optimal control of length $N$ for $x_0$}.
For simplicity of exposition, we assume that for each state $x_0 \in \mathbb{X}$ there exists an optimal control of length $N$ for $x_0$.

Similarly, we define the {infinite-horizon cost} for $x_0 \in \mathbb{X}$ and an admissible control $u \in \mathbb{U}^\infty(x_0)$ as
\begin{align*}
	J_\infty(x_0,u) \coloneqq \lim_{N \to \infty} J_N(x_0, \left. u \right|_{[0,N)} ) .
\end{align*} 
The corresponding optimal value function is
\begin{equation} \label{eq:valuze_function_inf}
	V_\infty(x_0) \coloneqq \inf \{ J_\infty(x_0,u) \mid u \in \mathbb{U}^\infty(x) \} .
\end{equation}
If the optimal value $V_\infty(x_0)$ is finite, a control $u \in \mathbb{U}^\infty(x_0)$ that satisfies ${J_\infty(x_0,u) = V_\infty(x_0)}$ is called an {optimal control for $x_0$}. 
The following standard observation, the proof of which can be found in, \eg \cite[Chapter~6]{Gruene2017}, is useful later. % in this work.
\begin{lemma} \label{lemma:increasing_value_function}
	Let $N \in \mathbb{N}$. 
	For each admissible state $x \in \mathbb{X}$, we have $V_{N}(x) \leq V_\infty(x)$. 
\end{lemma}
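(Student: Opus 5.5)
The argument is a direct comparison of costs that relies on the non-negativity of the stage cost. If $V_\infty(x) = \infty$ there is nothing to prove, so we assume $V_\infty(x) < \infty$ and fix an arbitrary admissible infinite control $u \in \mathbb{U}^\infty(x)$. By the definition of $\mathbb{U}^\infty(x)$, its restriction $\left. u \right|_{[0,N)}$ is admissible for $x$ up to time $N$, so it lies in $\mathbb{U}^N(x)$ and is a candidate in the infimum defining $V_N(x)$ in~\eqref{eq:value_function_finite}. The trajectories generated by $u$ and by its restriction agree on $\{0,\dots,N\}$, because both are produced by the same recursion from $x$.

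The key step is monotonicity of the partial sums. Along any admissible trajectory we have $x_u(t,x) \in \mathbb{X} \subseteq \R_{\geq 0}^n$ and $u(t) \in \R_{\geq 0}^m$. Together with $s > 0$ and $r \geq 0$, this gives $\ell(x_u(t,x),u(t)) \geq 0$ for every $t$. Hence $M \mapsto J_M(x,\left. u \right|_{[0,M)})$ is non-decreasing, its limit $J_\infty(x,u)$ exists in $[0,\infty]$, and
\begin{equation*}
    V_N(x) \leq J_N(x,\left. u \right|_{[0,N)}) \leq \lim_{M\to\infty} J_M(x,\left. u \right|_{[0,M)}) = J_\infty(x,u).
\end{equation*}

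Since $u \in \mathbb{U}^\infty(x)$ was arbitrary, taking the infimum over $u$ on the right-hand side gives $V_N(x) \leq V_\infty(x)$. The only point needing care is that $J_\infty$ is well defined as a limit, possibly infinite; this follows from the monotonicity established above, so I do not expect any genuine obstacle.
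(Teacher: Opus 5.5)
Your proposal is correct and matches the standard argument the paper relies on; the paper gives no proof of its own and only refers to Chapter~6 of Gr\"une--Pannek: restrict an admissible infinite control to $[0,N)$, use $\ell \geq 0$ to get $J_N \leq J_\infty$, and take the infimum. The preliminary case split on $V_\infty(x) = \infty$ is harmless but unnecessary, since the chain of inequalities already holds in $[0,\infty]$.
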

Finally, given an admissible feedback law $\mu \colon \mathbb{X} \to \mathbb{U}$, the {closed-loop cost} for $x_0 \in \mathbb{X}$ associated with $\mu$ is
\begin{equation*}
    J_\infty^\mathrm{cl}(x_0,\mu) \coloneqq \lim_{K \to \infty} \sum_{t=0}^{K-1} \ell(x_{\mu}(t,x_0), \mu(x_{\mu}(t,x_0))) .
\end{equation*}

\section{Solving the Optimal Control Problem}\label{sec:solving}

\noindent
The considered class of optimal control problems is of particular interest because -- for a specific case -- they can be solved analytically.
Moreover, the corresponding optimal solution has a clear interpretation with respect to the underlying graph structure. 
This case arises if we consider positivity constraints only, \ie ${\mathbb{X} = \R_{\geq 0}^n}$ and ${\mathbb{U} = \R_{\geq 0}^m}$. 
Note that ${\mathbb{U} = \R_{\geq 0}^m}$ implies ${\mathbb{U}(x) = U(x)}$.
In this section, we first recall results from the previous work \cite{Ohlin2024}. 
We then discuss why additional capacity constraints $\mathbb{X}$ and $\mathbb{U}$ significantly complicate the problem, and present model predictive control as a remedy. 
%We recall an explicit solution to the infinite-horizon optimal control problem.
Theorem~\ref{theo:positive_infinite_horizon} gives an explicit solution to the infinite-horizon problem. % is available.
\begin{theorem}[{\cite[Theorem~1]{Ohlin2024}}] \label{theo:positive_infinite_horizon}
    Let ${\mathbb{X} = \R_{\geq 0}^n}$ and ${\mathbb{U} = \R_{\geq 0}^m}$.
	Then, the optimal value function $V_\infty$ attains a finite value for each $x \in \R_{\geq 0}^n$ if and only if there exists $p \in \R_{>0}^n$ satisfying
	\begin{equation*}
		s +  \sum_{i=1}^n \min \{ r_i + B_i^\top p, 0 \} e_i = 0 .
	\end{equation*}
	In this case, the value for ${x \in \R_{\geq 0}^n}$ is ${V_\infty(x) = p^\top x}$.
\end{theorem}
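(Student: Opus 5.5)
The plan is to first rewrite the condition of Theorem~\ref{theo:positive_infinite_horizon} as a Bellman (shortest-path) equation on the graph, and then prove sufficiency by a telescoping verification argument and necessity by a reachability argument.

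\textbf{Reformulation.} Write $r_i \in \R^{m_i}$ for the part of $r$ belonging to the edges leaving vertex $i$. For an edge $k$ from $i$ to $j$, the entry $(B_i^\top p)_k$ equals $p_j - p_i$, with the convention $p_{n+1}=0$ for the goal vertex. Reading $\min\{\cdot,0\}$ as the minimum over the entries of the vector together with $0$, the $i$-th row of the condition reads
\[
s_i + \min\bigl\{ \min_k \bigl(r_{i,k} + p_{j(k)}\bigr) - p_i ,\, 0 \bigr\} = 0 .
\]
Since $s_i > 0$, the inner minimum must be strictly negative. Hence the condition is equivalent to
\[
p_i = s_i + \min_k \bigl(r_{i,k} + p_{j(k)}\bigr), \qquad p_{n+1} = 0 .
\]
This is the Bellman equation of a shortest-path problem to $g$ with vertex costs $s_i$ and edge costs $r_{i,k}$.

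\textbf{Sufficiency (verification).} Assume such $p \in \R^n_{>0}$ exists, and take any admissible $u$, so $u(t) \in U(x(t))$. Then
\[
\ell(x,u) + p^\top B u = s^\top x + \sum_i (r_i + B_i^\top p)^\top u_i .
\]
Each $u_i$ is nonnegative and $\mathbf{1}^\top u_i \le x_i$, and $\min\{r_i + B_i^\top p, 0\} \le 0$. Therefore
\[
(r_i + B_i^\top p)^\top u_i \;\ge\; \min\{r_i + B_i^\top p, 0\}\,\mathbf{1}^\top u_i \;\ge\; \min\{r_i + B_i^\top p, 0\}\, x_i .
\]
Summing over $i$ and using the condition gives $\ell(x,u) \ge p^\top x - p^\top x^+$, where $x^+ = x + Bu$. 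Telescoping yields $J_N(x_0,u) \ge p^\top x_0 - p^\top x(N)$.

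If $J_\infty(x_0,u) < \infty$, then $\sum_t s^\top x(t) < \infty$. Because $s > 0$, this forces $x(t) \to 0$, and therefore $J_\infty(x_0,u) \ge p^\top x_0$. Controls with infinite cost satisfy this bound trivially, so $V_\infty(x_0) \ge p^\top x_0$.

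For the upper bound, let each vertex $i$ send all of $x_i$ along a minimising edge $k^*(i)$. Then both inequalities above hold with equality, so $J_N = p^\top x_0 - p^\top x(N) \le p^\top x_0$ because $p > 0$. Letting $N \to \infty$ gives $V_\infty(x_0) \le p^\top x_0$, and hence $V_\infty(x) = p^\top x$.

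\textbf{Necessity (reachability).} Assume $V_\infty$ is finite everywhere. It suffices to show that every vertex $i$ has a directed path to $g$. Given that, define $p_i$ as the minimal cost over all paths from $i$ to $g$, where a path through vertices $v_0=i, v_1, \dots, v_\ell = g$ costs $\sum_{t<\ell} \bigl(s_{v_t} + r_{e_t}\bigr)$ with $e_t$ the edge used from $v_t$.
\begin{itemize}
\item This minimum is attained, because all costs are nonnegative and only simple paths need to be considered.
\item It satisfies the Bellman equation above, by the standard dynamic programming principle for shortest paths.
\item It is strictly positive, because $s > 0$.
\end{itemize}

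The main obstacle is the reachability claim, which I would prove by contradiction. Suppose vertex $i$ cannot reach $g$, and let $R \subseteq \mathcal{V}$ be the set of vertices reachable from $i$. No edge leaves $R$, and in particular no edge leads from $R$ to $g$. Hence every column of $B$ with $-1$ in a row of $R$ has its $+1$ also in $R$. Now start from $x_0 = e_i$. Mass can only enter $R$ from outside, and $x_0$ has no mass outside $R$, so all mass stays in $R$. Since the total mass is not reduced inside $R$, the quantity $\sum_{v\in R} x_v(t) = 1$ for all $t$. Consequently every stage cost satisfies $\ell \ge \min_v s_v > 0$, so $J_\infty = \infty$ for every admissible control. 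This contradicts the finiteness of $V_\infty(e_i)$.
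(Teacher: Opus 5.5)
Your proof is correct and complete. The sufficiency half is the standard Bellman verification argument, but the necessity half takes a different route from the usual one behind the cited result.

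The paper gives no proof of its own and imports the statement from \cite[Theorem~1]{Ohlin2024}, which builds on the explicit Bellman-equation theory for positive systems with linear cost. The standard necessity argument in that framework is value iteration. Set $p_0=0$ and $p_{N+1}=p_N+s+\sum_i\min\{r_i+B_i^\top p_N,0\}e_i$. Then one has $V_N(x)=p_N^\top x$, and the update is monotone in $p$: its $i$-th entry equals $s_i+\min\{(p_N)_i,\min_k(r_{i,k}+(p_N)_{j(k)})\}$, with $(p_N)_g=0$. So $p_N$ is nondecreasing and bounded componentwise by $V_\infty(e_i)$ via Lemma~\ref{lemma:increasing_value_function}. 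It therefore converges to a fixed point $p\ge s>0$, and the fixed-point equation is exactly the stated condition.

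You instead use the incidence structure. Finiteness forces every vertex to reach $g$, which you show via the forward-closed set $R$. After that, $p$ is written down explicitly as the shortest-path distance to $g$ with vertex weights $s$ and edge weights $r$.

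Your route gives more for this particular class. It proves the paper's unproved remark that finiteness is equivalent to every vertex having a path to $g$. It identifies $p$ as a Dijkstra-type quantity, and it makes $p>0$ immediate. The value-iteration route ignores the graph structure, so it carries over to general positive systems $x^+=Ax+Bu$ with linear input constraints and supplies a convergent algorithm. However, it yields neither the path characterisation nor the shortest-path interpretation without extra work.

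One wording fix: the sentence ``Mass can only enter $R$ from outside'' is muddled. What you actually use is that $\sum_{v\in R}B_{vc}\ge 0$ for every column $c$ of $B$, because a column with its $-1$ in $R$ has its $+1$ in $R$. Hence $\sum_{v\in R}x_v(t)$ is nondecreasing under any $u\ge 0$. That alone gives $\ell(x(t),u(t))\ge\min_{v}s_v>0$ for all $t$, and hence the contradiction.
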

\vspace{-1em}
\pagebreak
Throughout this subsection, we consider the case ${\mathbb{X} = \R_{\geq 0}^n}$ and ${\mathbb{U} = \R_{\geq 0}^m}$, and assume that a ${p \in \R_{>0}^n}$ as in Theorem~\ref{theo:positive_infinite_horizon} exists.
For the considered system class, this assumption is equivalent to the existence of a path from each vertex $i \in \mathcal{V}$ to the goal vertex $g$. 
Then, we can show that an optimal control exists for each $x \in \R_{\geq 0}^n$.

In particular, the optimal control takes feedback form, \ie there exists an admissible feedback law $\mu \colon \R_{\geq 0}^n \to \R_{\geq 0}^m$ such that for each $x_0 \in \R_{\geq 0}^n$, the control $u \in \mathbb{U}^\infty(x_0)$ defined by
\begin{equation} \label{eq:optimal_positive_control}
	u(t) = \mu(x_{\mu}(t,x_0)) 
\end{equation}
is optimal. % for $x$. 
To define $\mu$, construct the \emph{local} feedback gain %$K_i$ in node~$i$
\begin{equation} \label{eq:local_feedback}
   {K}_i = \Matrix{\mathbf{0}_{(k-1)\times n}\\e_i^\top\\\mathbf{0}_{(m_i-k)\times n}} 
   % {K}_i^\top = \Matrix{\mathbf{0}_{n \times (k-1)} & e_i & \mathbf{0}_{n \times (m_i-k)}} ,
\end{equation}
where $k$ is the (first) index of the minimal element of ${r_i + B_i^\top p}$. 
Here, the partition of $r$ follows the partition~\eqref{eq:B}. The global gain is then ${{K}=\Matrix{{K}_1^\top \dots {K}_n^\top}^\top}$ and the corresponding feedback law ${\mu(x) = {K}x}$. 
We refer to $\mu$ as the \emph{optimal positive feedback law}. 
It can be shown that for each $x_0 \in \R_{\geq 0}^n$, the control $u \in \mathbb{U}^\infty(x_0)$ defined by \eqref{eq:optimal_positive_control} is an optimal control for $x_0$ \cite[Theorem~1]{Ohlin2024}.%by Theorem~\ref{theo:positive_infinite_horizon}. 

The graph interpretation of $\mu$ is as follows.
Each vertex ${i \in \mathcal{V}}$ sends \emph{all} of the commodity in its buffer to the successor defined by $B_i{K}_i$, \ie the tail of the edge associated with the $k$-th column of $B_i$. 
We denote vertex~$i$'s successor by ${\nu(i) \in \mathcal{V} \cup \{g\}}$. Hence, $\mu$ selects for each vertex ${i \in \mathcal{V}}$ a unique successor ${j = \nu(i)}$ from those accessible through $B_i$. 
Effectively, $\mu$ removes all but one of the outgoing edges from the graph. 
Hence, in the resulting graph, there exists at most one path between any two vertices.
In addition, Theorem~\ref{theo:positive_infinite_horizon} implies that for each vertex ${i \in \mathcal{V}}$, there exists a path from $i$ to $g$, obtained by repeatedly applying $\nu$. 

Let~$\Tilde{B}$ be the matrix containing only the positive elements of~$B$, representing successor vertices for the inputs. % in $u$.
As a consequence of the structure of $K$, we get
\begin{equation}\label{eq:Btilde}
    BK = \Tilde{B}K - I.
\end{equation}
Note that, since all commodity leaves the system in finite time under~$\mu$,~$\tilde{B}K$ is nilpotent. \setcounter{example}{0}
\begin{example} \textit{(contd.)} \label{ex:optimal_positive_controller}
    Consider state costs ${s^\top\!\! = \!(10 \; 5 \; 1 \; 3 \; 2)}$, and control value costs ${r_1^\top = (1 \; 5 \; 5)}$, ${r_2 = \mathbf{1}}$, ${r_3 = r_4 = r_5 = 1.}$
    Then, ${p^\top\! = (19 \; 8 \; 2 \; 6 \; 5)}$ satisfies the condition in Theorem~\ref{theo:positive_infinite_horizon}. 
    Hence, $\nu$ is given by ${\nu(1) = 2}$, ${\nu(2) = 3}$, ${\nu(3) = 6}$, ${\nu(4) = \nu(5) = 3.}$
    The edges used by the resulting optimal positive feedback law are highlighted in green in Fig.~\ref{fig:shortest_path_problem}. 
    The edges $(1,4)$, $(1,5)$, $(2,4)$ and $(2,5)$ are not used by $\mu$. % in closed-loop.
\end{example}

Next, we treat the case where $\mathbb{X}$ and $\mathbb{U}$ are proper subsets of~$\R_{\geq 0}^n$ and $\R_{\geq 0}^m$, respectively. 
Specifically, we impose upper bounds on the states and control values, described by the sets
\begin{equation} \label{eq:box_constraints}
\begin{aligned}
	\mathbb{X} &\coloneqq \{
		x \in \R_{\geq 0}^n \mid x \leq \bar{x}
	\}, \\
	\mathbb{U} &\coloneqq \{
		(u_1, \dots, u_n) \in \R_{\geq 0}^m \mid \forall i \in \mathcal{V} \colon u_i \leq \bar{u}_i
	\},
\end{aligned}
\end{equation}
with given bounds $\bar{x} \in \R^n_{>0}$ and $\bar{u}_i \in \R_{>0}^{m_i}$. 
\pagebreak

% \noindent {\it Model Predictive Control}
\noindent {\it Model Predictive Control:} 
%We give a brief overview of MPC for readers unfamiliar with the approach.
In an MPC scheme, the optimal control problem is solved only over a {finite} horizon $N \in \N$ in receding horizon fashion \cite{Gruene2017,Borrelli2017}. 
Based on the solution to the finite-horizon problem, an admissible {MPC feedback law} ${\mu_N \colon \mathbb{X} \to \mathbb{U}}$ is constructed. %, which is applied in closed-loop. 
For any initial state ${x_0 \in \mathbb{X}}$, this leads to the closed-loop trajectory ${x_{\mu_N}(\cdot,x_0)}$. Specifically, the MPC feedback law $\mu_N$ is defined as follows. % in our case. 
For any ${x \in \mathbb{X}}$, let ${u^\star \in \mathbb{U}^N(x)}$ be an optimal control of length~$N$ for~$x$.
We then set ${\mu_N(x) \coloneqq u^\star(0)}$. 
Note that this definition does not require uniqueness of $u^\star$. 
For $\mu_N$ to be well-defined, suppose that for each $x \in\mathbb{X}$ we select one specific control from the set of optimal controls of length $N$. % for $x$. 
The feedback law~$\mu_N$ is admissible by construction. Furthermore, recursive feasibility of $\mu_N$ is guaranteed for all $x \in \mathbb{X}$, since $0 \in \mathbb{U}(x)$ implies that the zero control is always admissible, ensuring $\mathbb{U}^N(x) \neq \emptyset$ for all $N \in \mathbb{N}$.

From this construction alone, it is not immediately clear whether $\mu_N$ stabilises the closed-loop system. 
Similarly, it is not obvious how near the closed-loop cost $J_\infty^\mathrm{cl}(\cdot, \mu_N)$ of the MPC feedback law is to the optimal value function $V_\infty$.
The key to guaranteeing stability %for this scheme 
is boundedness of~$V_N$.

\begin{assumption} \label{ass:bound_optimal_value_function}
    There exists ${\gamma > 0}$ such that for each ${N \in \N}$
    \begin{equation*}
        \forall x \in \mathbb{X} \colon V_N(x) \leq \gamma s^\top x .
    \end{equation*}
\end{assumption}
Under this assumption, which is validated below in Section~\ref{sec:bounding}, we can derive a lower bound on the prediction horizon to guarantee stability of the closed loop. 
Moreover, a suboptimality estimate directly in terms of $V_\infty$ is available.
\begin{theorem} \label{theo:mpc_closed_loop}
    Let Assumption~\ref{ass:bound_optimal_value_function} hold, and suppose that the horizon $N \in \mathbb{N}$ satisfies
	\begin{equation*}
		N > 2 + \frac{\ln(\gamma-1)}{\ln \gamma - \ln(\gamma-1)} .
	\end{equation*}
	Then, the origin of the MPC closed-loop system is globally asymptotically stable on $\mathbb{X}$. 
	In addition, with
	\begin{equation*}
		(0,1) \ni \alpha_N = 1 - \frac{(\gamma - 1)^{N}}{\gamma^{N-1} - (\gamma - 1)^{N-1}} ,
	\end{equation*}
	we have for each $x_0 \in \mathbb{X}$ the suboptimality estimate
	\begin{equation*}
		J_\infty^\mathrm{cl}(x_0,\mu_N) \leq V_\infty(x_0) / \alpha_N .
	\end{equation*}
\end{theorem}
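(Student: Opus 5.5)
The plan is to follow the relaxed dynamic programming approach of~\cite{Gruene2008} (see also~\cite[Chapter~6]{Gruene2017}). Assumption~\ref{ass:bound_optimal_value_function} will play the role of the usual bound $V_N \leq \gamma\,\ell^\star$, where $\ell^\star(x) \coloneqq \min_{u\in\mathbb{U}(x)}\ell(x,u)$.

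\textbf{Preliminary observations.} Since $r \geq 0$ and $0 \in \mathbb{U}(x)$, we have $\ell^\star(x) = s^\top x$. This gives $V_N(x) \geq \ell(x,u^\star(0)) \geq s^\top x$ for every $N$. In particular $V_1(x) = s^\top x$, so Assumption~\ref{ass:bound_optimal_value_function} forces $\gamma \geq 1$. The case $\gamma=1$ is degenerate: then $V_N = s^\top x$, so the optimal input is $u = 0$. I will treat $\gamma>1$, where the logarithms in the statement are defined.

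A direct computation shows that the horizon condition is equivalent to $\gamma(\gamma-1)^{N-1} < \gamma^{N-1}$. Indeed,
\[
2+\frac{\ln(\gamma-1)}{\ln\gamma-\ln(\gamma-1)} = 1+\frac{\ln\gamma}{\ln\gamma-\ln(\gamma-1)} .
\]
The inequality $\gamma(\gamma-1)^{N-1} < \gamma^{N-1}$ is in turn equivalent to $(\gamma-1)^N < \gamma^{N-1}-(\gamma-1)^{N-1}$, \ie to $\alpha_N>0$. The bound $\alpha_N<1$ is immediate.

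\textbf{Core step: relaxed Lyapunov inequality.} The central claim is that for all $x \in \mathbb{X}$, writing $x^+ = x + B\mu_N(x)$,
\[
V_N(x^+) \leq V_N(x) - \alpha_N\, \ell(x,\mu_N(x)). \tag{$\ast$}
\]
Fix $x$ and an optimal control $u^\star$ of length $N$ with trajectory $x^\star(k)$, and set $\lambda_k \coloneqq \ell(x^\star(k),u^\star(k))$.

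By the dynamic programming principle, each tail of $u^\star$ is optimal for the corresponding shortened horizon. Combining this with the assumption and $\ell^\star \le \lambda_k$ gives
\[
\sum_{n=k}^{N-1}\lambda_n = V_{N-k}(x^\star(k)) \leq \gamma\, s^\top x^\star(k) \leq \gamma\lambda_k, \qquad k=0,\dots,N-2 .
\]
Next, I bound $V_N(x^\star(1))$ by a candidate control. For $j=0,\dots,N-2$, follow $u^\star$ from time $1$ to time $j$, then append an optimal control of length $N-j$ from $x^\star(j+1)$. This candidate is admissible, because admissible trajectories concatenate and the zero input remains admissible. It yields
\[
V_N(x^\star(1)) \leq \sum_{n=1}^{j}\lambda_n + \gamma\lambda_{j+1}.
\]

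\textbf{Main obstacle: the worst-case linear program.} The hard part is to show that these inequalities together imply $V_N(x^\star(1)) \leq \sum_{n=0}^{N-1}\lambda_n - \alpha_N\lambda_0$ with exactly the stated $\alpha_N$. By homogeneity I normalise $\lambda_0 = 1$. Then $\alpha_N$ is the optimal value of the linear program
\[
\min\ \sum_{n=0}^{N-1}\lambda_n - \nu \quad \text{over } \lambda \geq 0,\ \nu,
\]
subject to the two families of constraints above, with $\nu$ standing in for $V_N(x^\star(1))$.

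I would follow~\cite{Gruene2008} here. First, show that at the optimum all constraints are active. Second, solve the resulting linear recursion explicitly. With the constant bound sequence $\gamma_k \equiv \gamma$, Grüne's closed form
\[
\alpha = 1-\frac{(\gamma_N-1)\prod_{k=2}^{N}(\gamma_k-1)}{\prod_{k=2}^{N}\gamma_k-\prod_{k=2}^{N}(\gamma_k-1)}
\]
reduces to the stated $\alpha_N$. This is the technical heart of the proof, and I would cite or reproduce that computation rather than redo it.

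\textbf{Conclusion.} Summing $(\ast)$ along the closed loop from $t=0$ to $K-1$ gives
\[
\alpha_N\sum_{t<K}\ell(x_{\mu_N}(t),\mu_N(x_{\mu_N}(t))) \leq V_N(x_0)-V_N(x_{\mu_N}(K)) \leq V_N(x_0) \leq V_\infty(x_0),
\]
where the last step is Lemma~\ref{lemma:increasing_value_function}. Letting $K\to\infty$ yields the suboptimality estimate $J_\infty^\mathrm{cl}(x_0,\mu_N) \leq V_\infty(x_0)/\alpha_N$.

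For stability, $V_N$ is a Lyapunov function on $\mathbb{X}$, since it satisfies
\[
s^\top x \leq V_N(x) \leq \gamma s^\top x, \qquad V_N(x^+)-V_N(x) \leq -\alpha_N s^\top x .
\]
On $\R^n_{\geq0}$ with $s>0$, the function $s^\top x$ is a norm-equivalent positive definite function. Hence $V_N(x^+) \leq (1-\alpha_N/\gamma)V_N(x)$, which gives global (in fact exponential) asymptotic stability of the origin on $\mathbb{X}$.
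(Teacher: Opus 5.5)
Your proposal is correct and follows the same route as the paper. The paper notes $\ell^\star(x)=s^\top x$ (with $\mathcal{K}_\infty$ bounds) and invokes \cite[Theorem~6.20]{Gruene2017} via \cite[Proposition~6.18]{Gruene2017} with $B_K(r)=\gamma r$, and your LP with the two constraint families, the constant-$\gamma$ case of Gr\"une's formula, and the telescoping/Lyapunov argument are that cited machinery made explicit (your direct bound $V_N(x^+)\le(1-\alpha_N/\gamma)V_N(x)$ is a harmless strengthening to exponential stability).

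One small slip is in your side remark on $\gamma=1$: for $N\ge2$, $V_N(x)=s^\top x$ forces $x^\star(1)=0$ at zero input cost, so the optimal input empties the network in one step rather than being $u=0$. This case is irrelevant anyway, since the statement needs $\gamma>1$ and any larger $\gamma$ also satisfies Assumption~\ref{ass:bound_optimal_value_function}.
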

\begin{proof}
    We seek to apply \cite[Theorem~6.20]{Gruene2017}. 
    Let ${x \in \mathbb{X}}$ be arbitrary.
    By non-negativity of $r$ and since ${0 \in \mathbb{U}(x)}$ holds, we have ${\ell^\star(x) \coloneqq \min \{ \ell(x,u) \mid u \in \mathbb{U}(x) \} = s^\top x}$. 
    In addition, since the entries of $s$ are positive, there exist ${\alpha_1, \alpha_2 \in \mathcal{K}_\infty}$ such that ${\alpha_1(\| x \|_1) \leq \ell^\star(x) \leq \alpha_2(\| x \|_1)}$.
    Therefore, the assertions follow from \cite[Theorem~6.20]{Gruene2017}, by applying \cite[Proposition~6.18]{Gruene2017} with $B_K \colon r\mapsto \gamma r$.
\end{proof}

\begin{remark}
    In linear MPC, the typical approach to guarantee closed-loop stability is to solve a modified version of the optimal control problem by adding {terminal conditions}. 
    In these schemes, a suitably chosen {terminal set $\mathbb{X}_\mathrm{f} \subseteq \mathbb{X}$} is introduced, and the finite-horizon cost is changed to
    \begin{equation*}
        \sum_{t=0}^{N-1} \ell(x_u(t,x_0),u(t)) + F(x_u(N,x_0)) ,
    \end{equation*}
    where ${F \colon \mathbb{X}_\mathrm{f} \to \R_{\geq 0}}$ is a {terminal cost}. 
    Moreover, in addition to requiring ${u \in \mathbb{U}^N(x_0)}$, these schemes add the terminal constraint ${x_u(N,x_0) \in \mathbb{X}_\mathrm{f}}$. 
    {The arguably most intuitive solution to the problem at hand, namely the choice $F \equiv 0$ and $\mathbb{X}_\mathrm{f} = \{0\}$, falls in this framework, but is undesirable in practice for several reasons; see, \eg the discussion in \cite[Section~12.6]{Borrelli2017}.}
    Such terminal conditions %, \eg in linear-quadratic optimal control, 
    are {more} often based on the unconstrained linear-quadratic regulator. 
    
    The approach using terminal conditions is indeed also a viable strategy in the present setting, relying on the optimal positive feedback law $\mu$. We do not follow this strategy for two reasons. 
    First, adding the additional terminal constraint $\mathbb{X}_\mathrm{f}$ to the optimal control problem generally reduces the region of attraction of the closed MPC loop. 
    Typically, a large horizon $N$ is needed for a large region of attraction. 
    Second, the addition of the terminal cost~$F$ makes it significantly harder to estimate the difference between $J_\infty^\mathrm{cl}(\cdot,\mu_N)$ and the %original 
    optimal value function~$V_\infty$. 
    To obtain a good approximation in terms of $V_\infty$ in the case with terminal conditions, we would need ${N \to \infty}$, which we would like to avoid.
    For the outlined reasons, we choose an MPC scheme without terminal conditions. 
    We refer to \cite[Chapter~5 and Section~7.4]{Gruene2017} for a more detailed discussion. 
\end{remark}

\section{Bounding the Optimal Value Functions}\label{sec:bounding}

\noindent
The goal of this section is to verify Assumption~\ref{ass:bound_optimal_value_function}. 
Our approach is to design a suboptimal but linear admissible feedback law ${\hat{\mu} \colon \mathbb{X} \to \mathbb{U}}$. 
The structure of $\hat{\mu}$ admits an explicit expression for $J_\infty^\mathrm{cl}(\cdot,\hat{\mu})$.
This, together with Lemma~\ref{lemma:increasing_value_function}, yields a bound on the finite-horizon optimal value functions{, enabling stability and performance guarantees of the MPC closed-loop system by leveraging Theorem~\ref{theo:mpc_closed_loop}.}

We focus on a suboptimal feedback law $\hat{\mu}$ that has the same structure as the optimal positive feedback law ${\mu \colon \R_{\geq 0}^n \to \R_{\geq 0}^m}$. With additional constraints as in~\eqref{eq:box_constraints}, $\mu$ is not in general admissible. From a graph perspective, the following two scenarios result in an inadmissible policy:

$(i)$ The optimal positive feedback law identifies for each vertex ${i \in \mathcal{V}}$ a successor vertex ${j = \nu(i)}$ and sends \emph{all} of the commodity stored in $i$ to $j$. 
This could violate the control value constraint on the edge from $i$ to $j$.

$(ii)$ The full contents of a vertex $i$ are routed away at every time step under $\mu$. Despite this, however, the total incoming commodity from all vertices immediately upstream of $i$ may violate the state constraint. This requires some coordination among upstream vertices.
\setcounter{example}{0}
\begin{example} \textit{(contd.)}
    % We continue Example~\ref{ex:optimal_positive_controller}. 
    Given are the state bounds $\bar{x} = \mathbf{1}$ and the control value bounds $
        \bar{u}_1 = \bar{u}_2 = (1/4) \mathbf{1}, \quad 
        % \bar{u}_2 &= (1 \; 2 \; 2)^\top, &
        \bar{u}_3 = \bar{u}_4 = \bar{u}_5 = 1. $
 Here, $\mu$ is \emph{not} an admissible feedback law.
    For instance, consider the state ${x = (0 \; 1 \; 0 \; 0 \; 0)^\top \in \mathbb{X} .}$ 
    Then, $\mu(x) \notin \mathbb{U}(x)$, since the control value constraint of edge $(2,3)$ is violated. 
    That is an example of scenario $(i)$. 
    An example of scenario $(ii)$ is given by ${x = (0 \; 1/4 \; 0 \; 1 \; 0)^\top \in \mathbb{X}.}$
    Then, ${\mu(x) \in \mathbb{U}(x)}$ but ${\mu(x) \notin \mathbb{U}^1(x)}$, since ${x_\mu(1,x) = (0 \; 0 \; 5/4 \; 0 \; 0) \notin \mathbb{X}}$.
\end{example}
        
Thus, the main problem of the optimal positive feedback law regarding constraint satisfaction is the fact that \emph{all} of the commodity stored in each vertex is transferred at every time step. 
Motivated by this discussion, we design the admissible feedback law $\hat{\mu}$ by keeping for each vertex $i \in V$ the successor vertex $\nu(i)$ identified by $\mu$, but reducing the commodity sent over the identified edges at every time step by some factor. 
To make this precise, assume that ${\lambda \in (0,1]^n}$ is given and set $\Lambda = \operatorname{diag}(\lambda)$.
Define $\hat{\mu}$ by
\begin{equation}\label{eq:muhat}
    \hat{\mu}(x) \coloneqq K \Lambda x ,
\end{equation}
which first scales down each element $x_i$ by~$\lambda_i$, then follows the path identified by $\mu$.
As we establish next, it is possible to render $\hat{\mu}$ admissible by an appropriate choice of~$\lambda$.
\begin{proposition}\label{prop:admissible}
    Consider the feedback law $\hat{\mu}$ in \eqref{eq:muhat} and let
    \begin{equation}\label{eq:L}
        L = \{\lambda \in (0,1]^n \mid K\Lambda \bar{x} \leq \bar{u} \textnormal{ and } B K \Lambda \bar{x} \leq 0\}\; {\neq \emptyset} .
    \end{equation}
    {Then,} $\hat{\mu}$ is admissible if and only if ${\lambda\in L}$.
\end{proposition}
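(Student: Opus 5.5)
Admissibility of $\hat{\mu}$ means that for every $x \in \mathbb{X}$ three conditions hold: $\hat{\mu}(x) \in U(x)$, $\hat{\mu}(x) \in \mathbb{U}$, and $x + BK\Lambda x \in \mathbb{X}$. The plan is to check each condition for both directions of the equivalence. Linearity of $x \mapsto K\Lambda x$ and $x \mapsto (I + BK\Lambda)x$ reduces everything to the vertices of the box $\mathbb{X} = [0,\bar{x}]$. We can go further: since all matrices involved act coordinatewise through the structure of $K$, we can test each constraint row by row on a single worst-case vertex.

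\emph{Positivity constraint.} By~\eqref{eq:local_feedback}, the block $K_i\Lambda x$ has exactly one nonzero entry, equal to $\lambda_i x_i \geq 0$. Hence $\mathbf{1}^\top (K\Lambda x)_i = \lambda_i x_i \leq x_i$ because $\lambda_i \leq 1$, so $\hat{\mu}(x) \in U(x)$ holds automatically for every $\lambda \in (0,1]^n$.

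\emph{Input bound.} Each entry of $K\Lambda x$ is either $0$ or $\lambda_i x_i$ for a single index $i$. It is therefore nonnegative and monotone nondecreasing in $x$. So $K\Lambda x \leq \bar{u}$ for all $0 \leq x \leq \bar{x}$ holds if and only if it holds at $x = \bar{x}$, which is the first condition in~\eqref{eq:L}.

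\emph{State constraint.} This is the step needing the most care. Nonnegativity of the successor state is automatic: using~\eqref{eq:Btilde}, $x + BK\Lambda x = (I - \Lambda)x + \tilde{B}K\Lambda x \geq 0$, since both terms are nonnegative. For the upper bound, take row $j$:
\begin{equation*}
(1-\lambda_j)x_j + \sum_{i:\nu(i)=j} \lambda_i x_i \leq \bar{x}_j .
\end{equation*}
The left side is nondecreasing in every $x_i$ because all coefficients are nonnegative. Its supremum over the box is therefore attained at $x = \bar{x}$, which gives
\begin{equation*}
(1-\lambda_j)\bar{x}_j + \sum_{\nu(i)=j}\lambda_i\bar{x}_i \leq \bar{x}_j ,
\end{equation*}
that is, $(BK\Lambda\bar{x})_j \leq 0$. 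Self-loops are impossible because $\nu(i) \neq i$, since all commodity reaches $g$, and so the vertex $j$ does not appear in its own incoming sum. Together with the input bound, this shows sufficiency: if $\lambda \in L$, every condition holds on all of $\mathbb{X}$.

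\emph{Necessity.} Admissibility requires the conditions in particular at $x = \bar{x} \in \mathbb{X}$. At that point they read exactly $K\Lambda\bar{x} \leq \bar{u}$ and $\bar{x} + BK\Lambda\bar{x} \leq \bar{x}$, which is the definition of $L$.

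\emph{Nonemptiness.} The claim $L \neq \emptyset$ is part of the statement. To verify it, take all $\lambda_i$ small, with a topological ordering along the tree defined by $\nu$. Row $j$ requires $\sum_{\nu(i)=j}\lambda_i\bar{x}_i \leq \lambda_j\bar{x}_j$. Choosing $\lambda$ geometrically decreasing with upstream depth makes every row hold. Choosing the overall scale small enough also makes $K\Lambda\bar{x} \leq \bar{u}$ hold, because $\bar{u} > 0$.
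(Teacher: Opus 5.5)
Your proof is correct. For the equivalence itself you follow the paper's route. Positivity holds automatically because $\lambda\le\mathbf{1}$. Since $K\Lambda\ge 0$ and $I+BK\Lambda=(I-\Lambda)+\tilde{B}K\Lambda\ge 0$, the input and state bounds are monotone in $x$ on the box. Their worst case is therefore $x=\bar{x}\in\mathbb{X}$, which gives sufficiency and necessity at once. Your row-wise formula makes explicit the nonnegativity of $I+BK\Lambda$, which the paper attributes rather tersely to $\hat{\mu}(x)\in U(x)$. The self-loop remark is harmless but unnecessary, since a coefficient $1$ on $x_j$ would not break monotonicity.

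You genuinely differ from the paper on nonemptiness. The paper appeals to a matrix-theoretic fact: nilpotency of $\tilde{B}K$ gives some $v>0$ with $\tilde{B}Kv\le v$, and it then sets $\Lambda\bar{x}=\alpha v$. You instead construct such a $v$ by hand from the in-tree of $\nu$. You make $\lambda_i\bar{x}_i$ decay geometrically with the number $d(i)$ of $\nu$-steps from $i$ to $g$, using that every predecessor $i$ of $j$ has $d(i)=d(j)+1$. Your version is elementary, produces the strictly positive vector explicitly, and gives a concrete feasible point, for example to initialise the geometric program. The paper's version is shorter and stays within the nilpotent/M-matrix viewpoint that is reused in Lemma~\ref{lemma:admissible_lyapunov}.

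Do state the ratio explicitly, though. With $\lambda_i=c\,\varepsilon^{d(i)}$, row $j$ holds if and only if $\varepsilon\sum_{\nu(i)=j}\bar{x}_i\le\bar{x}_j$. So $\varepsilon$ must be chosen small relative to the capacities; an arbitrary ratio such as $1/2$ can fail when a vertex has many or large-capacity predecessors. Only then do you take $c$ small enough to get $\lambda\le\mathbf{1}$ and $K\Lambda\bar{x}\le\bar{u}$.
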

\begin{proof}
    We first show that any $\lambda \in (0,1]^n$ preserves positivity without additional assumptions.
    Let ${x \in \mathbb{X}}$ be arbitrary. 
    We have ${K \Lambda x \leq K x}$, so that ${\hat{\mu}(x) \in U(x)}$ follows immediately from the definition~\eqref{eq:U(x)}. 
    {Recall that the inequality ${x\le\bar{x}}$ is preserved when multiplied by a nonnegative matrix. Since $K\Lambda\ge0$}, %for each $x \in \mathbb{X}$ 
    we have for the control value constraint
    \begin{align*}
        {\forall x \in X \colon} \hat{\mu}(x) \in\mathbb{U}  %\; \textnormal{for all} \; x\in\mathbb{X}\\
        &\iff {\forall x \in X \colon} K\Lambda x\le\bar{u} \\  %\; \textnormal{for all} \; x\in\mathbb{X}\\
        &\iff  K\Lambda\bar{x} \le \bar{u}.
    \end{align*}
    Moreover, for each ${x \in \mathbb{X}}$, the inclusion ${\hat{\mu}(x)\in U(x)}$ additionally implies $(I+BK\Lambda)x \leq (I+BK\Lambda)\bar{x}$, and thus 
    \begin{align*}
         &{\forall x \in X \colon} x_{\hat{\mu}}(1,x) \in \mathbb{X} &
         &\!\!\!\!\!\!\!\iff 
         {\forall x \in X \colon} (I+BK\Lambda)x \le \bar{x} \\ 
         &\iff  
         (I+BK\Lambda)\bar{x}\le\bar{x} &
         &\!\!\!\!\!\!\!\iff 
         BK\Lambda\bar{x}\le0 .
    \end{align*}
    Combined, this yields the sought equivalence. 
    
    To show that ${L\neq \emptyset}$, first note that $K\Lambda\bar{x}\le\bar{u}$ can be satisfied by making~$\lambda$ sufficiently small. %, since all factors on the left-hand side are non-negative. 
    Likewise, if $\lambda$ satisfies $BK\Lambda\bar{x}\le0$, then so does any $\alpha\lambda$ with $\alpha\in(0,1]$. 
    We show the existence of such a $\lambda$ by applying~\eqref{eq:Btilde} to get
    \begin{equation}\label{eq:BtildeLambda}
        BK\Lambda\bar{x} \le 0 \iff \tilde{B}K\Lambda\bar{x} \le \Lambda\bar{x}.
    \end{equation}
    Since $\tilde{B}K$ is nilpotent, there exists ${v \in \R_{>0}^n}$ such that ${\tilde{B}Kv\le v}$~\cite[Theorem~8.3.1]{Horn2013}. We choose $\lambda$ to satisfy $\Lambda\bar{x} = \alpha v$ for some ${\alpha\in(0,1]}$, which can be taken sufficiently small to satisfy both inequalities. 
\end{proof}

So far, we have established that a suitable choice of~$\lambda$ ensures admissibility of $\hat{\mu}$. 
Next, we derive an explicit linear expression for the closed-loop infinite horizon cost $J_\infty^\mathrm{cl}(\cdot,\hat{\mu})$. 

\begin{lemma} \label{lemma:admissible_lyapunov}
    Assume that $p$ according to Theorem~\ref{theo:positive_infinite_horizon} exists and let ${\lambda \in L}$. 
    Then,
    $J^{\mathrm{cl}}_{\infty}(x_0,\hat{\mu})=\hat{p}^\top x_0$ with $\hat{p}\;{\in\R^n_{>0}}$ given by
    \begin{equation}\label{eq:phat}
        \hat{p} = {-} (BK)^{-\top}(\Lambda^{-1}s+K^\top r).
    \end{equation}
\end{lemma}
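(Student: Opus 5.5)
The plan is to write the closed loop under $\hat{\mu}$ as a linear system and sum the resulting geometric series of stage costs. Under \eqref{eq:muhat} the closed-loop trajectory is $x_{\hat{\mu}}(t,x_0) = A^t x_0$ with $A \coloneqq I + BK\Lambda$. By \eqref{eq:Btilde}, $A = I - \Lambda + \tilde{B}K\Lambda$, which is a nonnegative matrix since $\lambda \in (0,1]^n$ and $\tilde{B}K \geq 0$. The stage cost along the closed loop is
\[
\ell\bigl(A^t x_0, K\Lambda A^t x_0\bigr) = (s + \Lambda K^\top r)^\top A^t x_0 .
\]
Hence $J^{\mathrm{cl}}_\infty(x_0,\hat{\mu}) = (s+\Lambda K^\top r)^\top \sum_{t\ge0} A^t x_0$. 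The whole claim therefore reduces to showing that the spectral radius of $A$ is less than one, so that the Neumann series converges to $(I-A)^{-1} = -(BK\Lambda)^{-1}$.

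The key step, which I expect to be the main obstacle, is $\rho(A)<1$. Order the vertices along the tree induced by $\nu$, for instance by distance to $g$ under repeated application of $\nu$. Because $\tilde{B}K$ is nilpotent, it is then strictly triangular, in the sense that vertex $i$ feeds only $\nu(i)$, which is strictly closer to $g$. Consequently $A = (I-\Lambda) + \tilde{B}K\Lambda$ is triangular with diagonal $I-\Lambda$. Its eigenvalues are $1-\lambda_i \in [0,1)$ because $\lambda_i > 0$, so $\rho(A) < 1$.

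An alternative route to the same conclusion is as follows. Nilpotency of $\tilde{B}K$ makes $I - \tilde{B}K$ invertible with nonnegative inverse $\sum_k (\tilde{B}K)^k$. Then $-BK\Lambda = (I-\tilde{B}K)\Lambda$ is a nonsingular M-matrix, which again gives $\rho(A)<1$. Either way, $-BK$ is invertible with $(-BK)^{-1} = \sum_{k} (\tilde{B}K)^k \geq 0$, and its diagonal is $I$.

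With convergence established, the formula follows by direct computation:
\begin{align*}
J^{\mathrm{cl}}_\infty(x_0,\hat{\mu})
&= -(s+\Lambda K^\top r)^\top \Lambda^{-1}(BK)^{-1}x_0 \\
&= -(\Lambda^{-1}s + K^\top r)^\top (BK)^{-1} x_0 ,
\end{align*}
using that $\Lambda$ is diagonal. This equals $\hat{p}^\top x_0$ with $\hat{p}$ as in \eqref{eq:phat}. The limit in the definition of $J^{\mathrm{cl}}_\infty$ exists because all terms are nonnegative and the partial sums converge. Admissibility of the trajectory under $\lambda\in L$ is given by Proposition~\ref{prop:admissible}; note, however, that the cost formula itself only uses $\lambda\in(0,1]^n$.

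Positivity of $\hat{p}$ comes from $\hat{p} = (-BK)^{-\top}(\Lambda^{-1}s + K^\top r)$. Here $(-BK)^{-\top} = \sum_k (K^\top\tilde{B}^\top)^k \geq I$ entrywise on the diagonal and is nonnegative elsewhere. The vector $\Lambda^{-1}s + K^\top r$ is at least $\Lambda^{-1}s > 0$, since $s\in\R^n_{>0}$, $r \geq 0$ and $K \geq 0$. Hence $\hat{p} \geq \Lambda^{-1}s > 0$.

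The existence of $p$ from Theorem~\ref{theo:positive_infinite_horizon} is used only to guarantee that $K$ is well defined with every vertex reaching $g$, which is precisely what underlies the nilpotency of $\tilde{B}K$.
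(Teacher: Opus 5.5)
Your proof is correct, but it takes a different route from the paper's.

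\textbf{The paper's route.} The paper makes the linear ansatz $J^{\mathrm{cl}}_\infty(x,\hat\mu)=\hat p^\top x$ and substitutes it into the fixed-policy Bellman equation $s^\top x + r^\top K\Lambda x + \hat p^\top(I+BK\Lambda)x=\hat p^\top x$. It then solves the resulting linear system $-(BK)^\top\hat p=\Lambda^{-1}s+K^\top r$, using invertibility of $-(BK)^\top$, which follows from nilpotency of $\tilde BK$. Positivity comes from $-(BK)^{-\top}$ being a nonnegative, full-rank inverse, hence without zero rows, applied to the positive vector $\Lambda^{-1}s+K^\top r$.

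\textbf{Your route.} You evaluate the cost directly as the series $(s+\Lambda K^\top r)^\top\sum_t A^t x_0$. You justify convergence by showing $\rho(A)=\max_i(1-\lambda_i)<1$ from the triangular structure of $A=(I-\Lambda)+\tilde BK\Lambda$ (or, equivalently, from the M-matrix argument).

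\textbf{What each buys.} The paper's argument is shorter. Read literally, however, the ansatz only produces a solution of the policy-evaluation equation. Identifying that solution with the closed-loop cost needs a step the paper leaves implicit: the telescoping identity $\sum_{t<T}\ell\bigl(x_{\hat\mu}(t,x_0),\hat\mu(x_{\hat\mu}(t,x_0))\bigr)=\hat p^\top x_0-\hat p^\top x_{\hat\mu}(T,x_0)$, together with $x_{\hat\mu}(T,x_0)\to 0$. Your argument supplies exactly this convergence, with an explicit decay rate. It also makes clear that the formula holds for every $\lambda\in(0,1]^n$, so $\lambda\in L$ matters only for admissibility. Finally, it gives the slightly sharper bound $\hat p\ge\Lambda^{-1}s$, via $(-BK)^{-1}=\sum_k(\tilde BK)^k\ge I$ entrywise.
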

\begin{proof}
    Since $\hat{\mu}$ is admissible for $\lambda\in L$, the closed-loop cost can be found by evaluating the Bellman equation with fixed policy $\hat{\mu}$ and ansatz $J^{\mathrm{cl}}_{\infty}(x,\hat{\mu}) = \hat{p}^\top x$, yielding %(see \cite[Proposition 6]{Ohlin2025}). 
    \begin{align*}
        && \!\!\!\!\!\!\!\!\!\!\!\!\!\!\!\!\!\!\!\!s^\top\! x + r^\top\! K\Lambda x + \hat{p}^\top \!(I+BK\Lambda)x &=\hat{p}^\top x\\
        &\iff&(I-(I+BK\Lambda)^\top)\hat{p} &= s+\Lambda^\top K^\top r\\
        &\iff& {-} (BK)^\top\hat{p} &= \Lambda^{-1}s+K^\top r.
        % &\iff& \hat{p} &= (\Lambda^{-1} s +K^\top r
    \end{align*}
    From here, we arrive at~\eqref{eq:phat} by noting that~\eqref{eq:Btilde} implies invertibility {of~$-(BK)^\top$ since $\tilde{B}K$ is nilpotent. 
    Moreover, $-(BK)^\top$ is an M-matrix by construction and therefore has a nonnegative inverse. 
    Since $-(BK)^{-\top}$ has full rank, it cannot contain a zero row, so $\Lambda^{-1} s + K^\top r > 0$ implies ${\hat{p} > 0}$.  
    }
\end{proof}

{Lemma~\ref{lemma:admissible_lyapunov} allows us} to verify Assumption~\ref{ass:bound_optimal_value_function}.
\begin{proposition} \label{prop:value_function_bound}
    Let the assumptions of Lemma~\ref{lemma:admissible_lyapunov} hold. 
    Then, Assumption~\ref{ass:bound_optimal_value_function} is satisfied with $\gamma = \max \{ \hat{p}_i / s_i \mid i \in \mathcal{V} \}$. 
\end{proposition}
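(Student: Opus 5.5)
The plan is to chain three facts: $V_N \le V_\infty$, $V_\infty$ is bounded by the closed-loop cost of the admissible linear law $\hat{\mu}$, and that closed-loop cost is explicitly linear. Fix $N \in \N$ and $x_0 \in \mathbb{X}$. By Lemma~\ref{lemma:increasing_value_function} we have $V_N(x_0) \le V_\infty(x_0)$. It therefore suffices to show $V_\infty(x_0) \le \hat{p}^\top x_0$ and then $\hat{p}^\top x_0 \le \gamma s^\top x_0$.

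For the first inequality, use Proposition~\ref{prop:admissible}. Since $\lambda \in L$, the feedback law $\hat{\mu}$ is admissible. By induction on $t$, the closed-loop trajectory $x_{\hat{\mu}}(t,x_0)$ stays in $\mathbb{X}$, and $\hat{\mu}(x_{\hat{\mu}}(t,x_0)) \in \mathbb{U}^1(x_{\hat{\mu}}(t,x_0))$ at every step. Hence the infinite control $u(t) \coloneqq \hat{\mu}(x_{\hat{\mu}}(t,x_0))$ lies in $\mathbb{U}^\infty(x_0)$. Its trajectory coincides with the closed-loop trajectory, so $J_\infty(x_0,u) = J_\infty^{\mathrm{cl}}(x_0,\hat{\mu})$. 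By Lemma~\ref{lemma:admissible_lyapunov} this equals $\hat{p}^\top x_0$, and since $V_\infty$ is an infimum over admissible controls, $V_\infty(x_0) \le \hat{p}^\top x_0$.

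For the second inequality, note that $x_0 \ge 0$ and $s > 0$ componentwise. With $\gamma = \max_i \hat{p}_i/s_i$ we have $\hat{p}_i \le \gamma s_i$ for every $i$, so
\begin{equation*}
\hat{p}^\top x_0 = \sum_{i} \hat{p}_i x_{0,i} \le \gamma \sum_i s_i x_{0,i} = \gamma s^\top x_0 .
\end{equation*}
Finally, $\gamma > 0$ because $\hat{p} \in \R^n_{>0}$ by Lemma~\ref{lemma:admissible_lyapunov} and $s > 0$. This gives Assumption~\ref{ass:bound_optimal_value_function}, and the bound is uniform in $N$.

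The only step needing care is identifying $J_\infty^{\mathrm{cl}}(x_0,\hat{\mu})$ with the cost of an element of $\mathbb{U}^\infty(x_0)$. This requires that admissibility of $\hat{\mu}$ in the one-step sense propagates along the whole trajectory, which is exactly the induction above. It also requires that the limit defining $J_\infty^{\mathrm{cl}}$ is finite, which Lemma~\ref{lemma:admissible_lyapunov} already supplies. Everything else is a direct combination of earlier results.
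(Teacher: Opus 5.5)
Your proposal is correct and follows the paper's proof: the chain $V_N \le V_\infty \le J_\infty^{\mathrm{cl}}(\cdot,\hat{\mu}) = \hat{p}^\top x \le \gamma s^\top x$, using Lemma~\ref{lemma:increasing_value_function}, admissibility of $\hat{\mu}$, Lemma~\ref{lemma:admissible_lyapunov}, and the componentwise bound $\hat{p}_i \le \gamma s_i$. The one addition is the induction showing that the closed-loop control lies in $\mathbb{U}^\infty(x_0)$, which the paper leaves implicit in ``by definition of $V_\infty$''.
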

\begin{proof}
	By definition of $V_\infty$ %and Corollary~\ref{coro:suboptimal_closed_loop_cost}, 
    we have for each $x \in \mathbb{X}$ that
	\begin{equation*}
		V_\infty(x) \leq J_\infty^\mathrm{cl}(x,\hat{\mu}) = \hat{p}^\top x.
	\end{equation*}
    By the stated definition, $\gamma$ is the smallest number such that ${\hat{p}^\top\! x\le \gamma s^\top\! x}$. 
    By Lemma~\ref{lemma:increasing_value_function}, ${V_N(x) \le \gamma s^\top\! x}$ as desired.
\end{proof}

Finally, we discuss how to select {an optimal} $\lambda \in L$ based on the findings in Theorem~\ref{theo:mpc_closed_loop}. 
From there, it is immediate that a small bound $\gamma$ is desirable. 
By Proposition~\ref{prop:value_function_bound}, we are thus interested in computing %the \emph{optimal bound}
\begin{equation} \label{eq:optimal_gamma}
    \gamma^\star \coloneqq \min \{ \max \{ \hat{p}_i / s_i \mid i \in \mathcal{V} \} \mid \lambda \in L \} ,
\end{equation}
where $\hat{p}$ is defined by \eqref{eq:phat}.
The constraint ${\lambda > 0}$ makes it natural to formulate the design problem as a \emph{geometric program} \cite[Section~4.5]{Boyd2004}.

\begin{proposition} \label{prop:geometric_program}
    The optimal bound $\gamma^\star$ in \eqref{eq:optimal_gamma} can be obtained by solving {the geometric program} %\eqref{eq:geometric_program}
    {\begin{mini}
        {(\gamma,\lambda)}{\gamma \;\;\;\;\;\;\;\;\;\;\;\;\;\;\;\;\;\;\;\;\;\;\;\;\;\;\;\;\;\;\;\;\;\;\;\;\;\;\;\;\;\;\;\;\;\;\;\;\;\;\;\;\;\;\;}
        {\label{eq:geometric_program}}{}
        \addConstraint{\lambda \in}{\;L}
        \addConstraint{\gamma s \ge }{-(BK)^{-\top}(\Lambda^{-1} s + K^\top r).}
    \end{mini}}
\end{proposition}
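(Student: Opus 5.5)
The plan is to prove two things in turn: that the program has the same optimal value as \eqref{eq:optimal_gamma}, and that it is a geometric program in the sense of \cite[Section~4.5]{Boyd2004}, meaning a monomial objective, posynomial-$\le$-monomial constraints, and positive variables.

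\emph{Equivalence with \eqref{eq:optimal_gamma}.} I will use the standard epigraph reformulation. Fix $\lambda \in L$ and let $\hat{p}$ be given by \eqref{eq:phat}. Since $s > 0$, the constraint $\gamma s \ge \hat{p}$ holds if and only if $\gamma \ge \hat{p}_i/s_i$ for all $i \in \mathcal{V}$. Hence, for fixed $\lambda$, the smallest feasible $\gamma$ is $\max_i \hat{p}_i/s_i$. Minimising this over $\lambda \in L$ gives exactly $\gamma^\star$. Lemma~\ref{lemma:admissible_lyapunov} guarantees $\hat{p} > 0$, so the optimal $\gamma$ is positive, as a geometric program requires.

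I also want the minimum in \eqref{eq:optimal_gamma} to be attained. The set $L$ is closed in $(0,1]^n$, and $L \neq \emptyset$ by Proposition~\ref{prop:admissible}. As shown below, each $\hat{p}_i$ contains a term $c_i/\lambda_i$ with $c_i > 0$, so $\hat{p}_i \to \infty$ when $\lambda_i \to 0$. Therefore every sublevel set of the objective stays bounded away from $\lambda_i = 0$, which makes it compact, and the minimum exists.

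\emph{Geometric-program structure.} Set $M \coloneqq -(BK)^{-\top}$. The proof of Lemma~\ref{lemma:admissible_lyapunov} shows $M$ is the inverse of an M-matrix, so $M \ge 0$ entrywise. Moreover $M = \sum_{k\ge 0} (\tilde{B}K)^{\top k}$ by \eqref{eq:Btilde} and nilpotency of $\tilde{B}K$, so its diagonal entries are at least $1$. Row $i$ of the $\gamma$-constraint then reads
\begin{equation*}
\sum_{j} M_{ij}\, s_j\, \lambda_j^{-1} + \sum_j M_{ij} (K^\top r)_j \le \gamma s_i .
\end{equation*}
The left-hand side is a posynomial in $\lambda$: its coefficients are nonnegative and terms with zero coefficient are simply dropped. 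The right-hand side is a monomial in $\gamma$, so dividing gives a posynomial $\le 1$ constraint.

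For the set $L$, each row of $K\Lambda\bar{x} \le \bar{u}$ is either trivial or of the form $\bar{x}_i \lambda_i \le \bar{u}_k$, which is a monomial constraint. By \eqref{eq:BtildeLambda}, $BK\Lambda\bar{x} \le 0$ is equivalent to $\tilde{B}K\Lambda\bar{x} \le \Lambda\bar{x}$. Row $i$ of this is $\sum_j (\tilde{B}K)_{ij}\bar{x}_j\lambda_j \le \bar{x}_i\lambda_i$: a posynomial with nonnegative coefficients, since $\tilde{B}K \ge 0$, bounded by a monomial. Note that $(\tilde{B}K)_{ii} = 0$ by nilpotency, so no self-cancellation occurs. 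Finally, $\lambda_i \le 1$ is a monomial constraint, and $\lambda > 0$, $\gamma > 0$ form the implicit positive domain of a geometric program.

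\emph{Main obstacle.} The delicate points are the sign structure and the domain. Everything hinges on the entrywise nonnegativity of $M$ and of $\tilde{B}K$; a single negative coefficient would break posynomiality. The other care point is that $L$ uses the half-open interval $(0,1]$, which matches the open positive orthant used in the geometric program. I would establish the sign structure first via the Neumann-series identity for $M$, and check the domain issue next.
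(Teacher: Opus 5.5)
Your proposal is correct and follows essentially the same route as the paper: the epigraph reformulation $\gamma s \ge \hat p \iff \gamma \ge \max_i \hat p_i/s_i$, followed by a row-by-row posynomiality check that rests on the entrywise nonnegativity of $-(BK)^{-\top}$, $\tilde B K$ and $K$. The paper writes the upstream constraint as $\Lambda^{-1}\tilde B K \Lambda \bar x \le \bar x$ rather than in your equivalent posynomial-$\le$-monomial form, and it does not argue that the minimum in \eqref{eq:optimal_gamma} is attained, so your Neumann-series bound $M_{ii} \ge 1$ and the compactness argument built on it are a valid addition that fills that small gap.
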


\begin{proof}
    {The second constraint ensures that, for each $i\in\mathcal{V}$, we have $\gamma \ge \hat{p}_i/s_i$, with $\hat{p}$ given by~\eqref{eq:phat}. 
    Hence, the value of~\eqref{eq:geometric_program} is $\gamma^*$ as in~\eqref{eq:optimal_gamma}.
    It is left to show that \eqref{eq:geometric_program} is a geometric program.}
    {This requires all inequality constraints to be element-wise posynomials and the decision variables to be strictly positive.}
    
    {Consider the constraints of~\eqref{eq:L}. 
    The positivity constraint $\lambda \in (0,1]^n$ is immediately in posynomial form.
    %The first is immediately on posynomial form and yields nonnegativity of $\lambda$. 
    The capacity constraint $K \Lambda \bar{x} \leq \bar{u}$ is posynomial in each row, since ${K\ge0}$ and ${\bar{x}, \bar{u} >0}$. 
    We can rewrite the upstream state constraint $B K \Lambda \bar{x} \leq 0$ using~\eqref{eq:Btilde} as
    \begin{equation}\label{eq:reform}
        B K \Lambda \bar{x} \leq 0 \iff \Lambda^{-1}\Tilde{B}K\Lambda\bar{x}\le \bar{x}.
    \end{equation}
    Recall that ${\Lambda = \textnormal{diag}(\lambda)}$, so $\Lambda^{-1}$ has the element-wise inverses of~$\lambda$ along the diagonal. 
    Moreover, ${\Tilde{B}, K \ge 0}$ and ${\bar{x} > 0}$, so the rows of $\Tilde{B} K \Lambda \bar{x}$ are posynomials in $\lambda$. 
    The division of a posynomial by a monomial yields a posynomial, which implies that the rows of $\Lambda^{-1} \Tilde{B} K \Lambda \bar{x}$ are posynomials in $\lambda$.
    In the final constraint of~\eqref{eq:geometric_program}, Lemma~\ref{lemma:admissible_lyapunov} yields that the right-hand side is positive, and thus ${s > 0}$ implies ${\gamma > 0}$. 
    The matrix ${-(BK)^{-\top}}$ is nonnegative as argued in the proof of Lemma~\ref{lemma:admissible_lyapunov}, and $K^\top r$ is nonnegative by construction.
    Dividing by $\gamma$, all coefficients in the right-hand side product are nonnegative, yielding a posynomial in $(\gamma,\lambda)$.
    
    Finally, since $\bar{x}, \bar{u}$ and $s$ are all element-wise positive, the upper bounds of all constraints in \eqref{eq:geometric_program} can be normalised to~$\mathbf{1}$. 
    This finishes the proof.}
\end{proof}

%%%%%%%%%%%%%%%%%%%%%%%%%%%%%%%%%%%%%%%%%%%%%%%%%%%%%%%%%%%%%%%%%%%%%%%%%%%%%%%%
\section{Numerical Example}\label{sec:examples}
\noindent
We demonstrate the MPC solution for the system defined in Example~\ref{ex:incidence_matrix}. 
First, we solve the geometric program in Proposition~\ref{prop:geometric_program}, which gives the optimal bound ${\gamma^\star = 6.4}$ for ${\lambda^\top\!= (.25 \; .25 \; 1 \; .29 \; .31)}$. 
Consequently, the minimal stabilising horizon according to Theorem~\ref{theo:mpc_closed_loop} is $N_0 = 12$. 
We select $N$ to be the smallest horizon that yields $\alpha_N > 0.5$ for the suboptimality estimate, which is $N = 16$ with ${\alpha_N = 0.54}$. 
We simulate the closed-loop system under the MPC feedback law $\mu_N$ starting from the initial state $\bar{x}$, and compare it with the closed-loop system under $\hat{\mu}$. 
The resulting trajectories are shown in Fig.~\ref{fig:simulation_states} and Fig.~\ref{fig:simualtion_controls}. 
We can see that MPC steers the system to the origin in a finite number of steps, while $\hat{\mu}$ displays the expected asymptotic behaviour.
\begin{figure}
\centering
\begin{tikzpicture}[>=latex,x=0.6cm,y=1.5cm]
% MPC --states
\begin{scope}[xshift=-2.25cm]
    % grid
    \draw[lightgray, line width=0.5pt, ystep=0.25, xstep=1] (0,0) grid (5,1);
    \draw[->] (0,0) -- node[pos=1, above] {\footnotesize $t$} (5.5,0);
    \draw[->] (0,0) -- node[pos=1, right] {\footnotesize $x_i(t)$} (0,1.25);
    % labels
    \foreach \t in {0, ..., 5} {
        \node[below] at (\t,0) {\tiny $\t$};
    }
    \foreach \t in {1} {
        \node[left] at (0,\t) {\tiny $\t$};
    }

    % vertex #1
    \draw[myblue] plot[mark=*] coordinates {
        (0,1) (1,0.25) (2,0) (3,0) (4,0) (5,0)
    };
    
    % vertex #2
    \draw[myorange] plot[mark=*] coordinates {
        (0,1) (1,0.75) (2,0.25) (3,0) (4,0) (5,0)
    };
    % vertex #3
    \draw[mygreen] plot[mark=*] coordinates {
        (0,1) (1,1) (2,1) (3,1) (4,1) (5,0)
    };
    % vertex #4
    \draw[myred] plot[mark=*] coordinates {
        (0,1) (1,1) (2,0.75) (3,0) (4,0) (5,0)
    };
    % vertex #5
    \draw[mypurple] plot[mark=*] coordinates {
        (0,1) (1,1) (2,1) (3,1) (4,0) (5,0)
    };
\end{scope}
% Admisible Feedback Law -- states
\begin{scope}[xshift=2.25cm]
    % grid
    \draw[lightgray, line width=0.5pt, ystep=0.25, xstep=1] (0,0) grid (5,1);
    \draw[->] (0,0) -- node[pos=1, above] {\footnotesize $t$} (5.5,0);
    \draw[->] (0,0) -- node[pos=1, right] {\footnotesize $x_i(t)$} (0,1.25);
    % labels
    \foreach \t in {0, ..., 5} {
        \node[below] at (\t,0) {\tiny $\t$};
    }
    \foreach \t in { 1} {
        \node[left] at (0,\t) {\tiny $\t$};
    }

    % vertex #1
    \draw[myblue] plot[mark=*] coordinates {
        (0,1) (1,0.75) (2,0.5625) (3,0.4219) (4,0.3164) (5,0.2373)
    };
    % vertex #2
    \draw[myorange] plot[mark=*] coordinates {
        (0,1) (1,1) (2,0.9375) (3,0.8437) (4,0.7383) (5,0.6328)
    };
    % vertex #3
    \draw[mygreen] plot[mark=*] coordinates {
        (0,1) (1,0.8427) (2,0.6669) (3,0.5277) (4,0.4173) (5,0.3298)
    };
    % vertex #4
    \draw[myred] plot[mark=*] coordinates {
        (0,1) (1,0.7124) (2,0.5075) (3,0.3615) (4,0.2575) (5,0.1834)
    };
    % vertex #5
    \draw[mypurple] plot[mark=*] coordinates {
        (0,1) (1,0.695) (2,0.483) (3,0.3357) (4,0.2333) (5,0.1621)
    };
\end{scope}
% legend
\begin{scope}[node distance=0.2cm]
    \node[myblue] (l1) at (2.5,1.0) {\tiny $i=1$};
    \node[myorange,below of=l1] (l2) {\tiny $i=2$};
    \node[mygreen,below of=l2] (l3) {\tiny $i=3$};
    \node[myred,below of=l3] (l4) {\tiny $i=4$};
    \node[mypurple,below of=l4] (l5) {\tiny $i=5$};

    \draw (l5.south west) rectangle (l1.north east);
\end{scope}
\end{tikzpicture}
\caption{Evolution of the states $x(\cdot) \coloneqq x_{\bar{\mu}}(\cdot,\bar{x})$ %over $t={0,...,5}$ 
in the example of Section~\ref{sec:examples} using MPC ($\bar{\mu} = \mu_N$, left) and the suboptimal feedback law ($\bar{\mu} = \hat{\mu}$, right).}
\vspace*{-5mm}
\label{fig:simulation_states}
\end{figure} 
The closed-loop costs are given by $J_\infty^\mathrm{cl}(\bar{x},\mu_N) = 56.5$ and $J_\infty^\mathrm{cl}(\bar{x},\hat{\mu}) = 111.97$, respectively.
The main difficulty for $\hat{\mu}$ is the \qq{bottleneck} given by vertex~$3$: the initial commodity stored in vertex~$2$ must pass through it, but vertex~$3$ can only accept commodity up to its capacity constraint $\bar{x}_3 = 1$. 
Due to the fixed structure of~$\hat{\mu}$, commodity is not diverted to the edges $(2,4)$ and $(2,5)$. 
The MPC feedback law, on the other hand, uses these two edges during the first two time steps.

We remark that the minimal stabilising horizon in Theorem~\ref{theo:mpc_closed_loop} crucially depends on the stage cost $\ell$, which is well-known; \cf the discussion \cite[Section~6.6]{Gruene2017}. 
In this example, choosing a high state cost $s_1$ gives a small $N_0$, but $N_0$ can be significantly larger for different choices. 
Systematically designing a stage cost that leads to a short minimal horizon is an interesting future research direction. 

\begin{figure}
\centering
\begin{tikzpicture}[>=latex,x=0.6cm]
% MPC -- controls
\begin{scope}[xshift=-2.2cm]
    % vertex #1
    \begin{scope}[y=4cm,yshift=1.75cm]
        % grid
        \draw[lightgray, line width=0.5pt, ystep=0.125, xstep=1] (0,0) grid (4,0.25);
        \draw[->] (0,0) -- node[pos=1, above] {\footnotesize $t$} (4.5,0);
        \draw[->] (0,0) -- node[pos=1, right] {\footnotesize $u_{1,k}(t)$} (0,0.325);
        % labels
        \foreach \t in {0, ..., 4} {
            \node[below] at (\t,0) {\tiny $\t$};
        }
        \foreach \t in {.25} {
            \node[left] at (0,\t) {\tiny $\t$};
        }

        % edge (1 -> 2)
        \draw[myblue] plot[mark=*] coordinates {
            (0,0.25) (1,0.25) (2,0) (3,0) (4,0)
        };

        % edge (1 -> 4)
        \draw[myorange] plot[mark=*] coordinates {
            (0,0.25) (1,0) (2,0) (3,0) (4,0)
        };

        % edge (1 -> 5)
        \draw[mygreen] plot[mark=*] coordinates {
            (0,0.25) (1,0) (2,0) (3,0) (4,0)
        };
    \end{scope}
    % vertex #2
    \begin{scope}[y=4cm,yshift=0cm]
        % grid
        \draw[lightgray, line width=0.5pt, ystep=0.125, xstep=1] (0,0) grid (4,0.25);
        \draw[->] (0,0) -- node[pos=1, above] {\footnotesize $t$} (4.5,0);
        \draw[->] (0,0) -- node[pos=1, right] {\footnotesize $u_{2,k}(t)$} (0,0.325);
        % labels
        \foreach \t in {0, ..., 4} {
            \node[below] at (\t,0) {\tiny $\t$};
        }
        \foreach \t in {.25} {
            \node[left] at (0,\t) {\tiny $\t$};
        }

        % edge (2 -> 3)
        \draw[myblue] plot[mark=*] coordinates {
            (0,0.25) (1,0.25) (2,0.25) (3,0) (4,0)
        };

        % edge (2 -> 4)
        \draw[myorange] plot[mark=*] coordinates {
            (0,0.1226) (1,0.25) (2,0.0) (3,0) (4,0)
        };

        % edge (2 -> 5)
        \draw[mygreen] plot[mark=*] coordinates {
            (0,0.1274) (1,0.25) (2,0.0) (3,0) (4,0)
        };
    \end{scope}
    % vertices #3, 4 and 5
    \begin{scope}[y=1.04cm,yshift=-1.75cm]
        % grid
        \draw[lightgray, line width=0.5pt, ystep=0.5, xstep=1] (0,0) grid (4,1);
        \draw[->] (0,0) -- node[pos=1, above] {\footnotesize $t$} (4.5,0);
        \draw[->] (0,0) -- node[pos=1, right] {\footnotesize $u_{i}(t)$} (0,1.25);
        % labels
        \foreach \t in {0, ..., 4} {
            \node[below] at (\t,0) {\tiny $\t$};
        }
        \foreach \t in { 1} {
            \node[left] at (0,\t) {\tiny $\t$};
        }

        % edge (3 -> 6)
        \draw[myblue] plot[mark=*] coordinates {
            (0,1) (1,1) (2,1) (3,1) (4,1)
        };

        % edge (4 -> 3)
        \draw[myorange] plot[mark=*] coordinates {
            (0,0.3726) (1,0.5) (2,0.75) (3,0) (4,0)
        };

        % edge (3 -> 3)
        \draw[mygreen] plot[mark=*] coordinates {
            (0,0.3774) (1,0.25) (2,0) (3,1) (4,0)
        };
    \end{scope}
\end{scope}
% Admisisble Feedback Law -- controls
\begin{scope}[xshift=2.2cm]
    % vertex #1
    \begin{scope}[y=4cm,yshift=1.75cm]
        % grid
        \draw[lightgray, line width=0.5pt, ystep=0.125, xstep=1] (0,0) grid (4,0.25);
        \draw[->] (0,0) -- node[pos=1, above] {\footnotesize $t$} (4.5,0);
        \draw[->] (0,0) -- node[pos=1, right] {\footnotesize $u_{1,k}(t)$} (0,0.325);
        % labels
        \foreach \t in {0, ..., 4} {
            \node[below] at (\t,0) {\tiny $\t$};
        }
        \foreach \t in {.25} {
            \node[left] at (0,\t) {\tiny $\t$};
        }

        % edge (1 -> 2)
        \draw[myblue] plot[mark=*] coordinates {
            (0,0.25) (1,0.1875) (2,0.1406) (3,0.1055) (4,0.0791)
        };

        % edge (1 -> 4)
        \draw[myorange] plot[mark=*] coordinates {
            (0,0) (1,0) (2,0) (3,0) (4,0)
        };

        % edge (1 -> 5)
        \draw[mygreen] plot[mark=*] coordinates {
            (0,0) (1,0) (2,0) (3,0) (4,0)
        };
    \end{scope}
    % vertex #2
    \begin{scope}[y=4cm,yshift=0cm]
        % grid
        \draw[lightgray, line width=0.5pt, ystep=0.125, xstep=1] (0,0) grid (4,0.25);
        \draw[->] (0,0) -- node[pos=1, above] {\footnotesize $t$} (4.5,0);
        \draw[->] (0,0) -- node[pos=1, right] {\footnotesize $u_{2,k}(t)$} (0,0.325);
        % labels
        \foreach \t in {0, ..., 4} {
            \node[below] at (\t,0) {\tiny $\t$};
        }
        \foreach \t in {.25} {
            \node[left] at (0,\t) {\tiny $\t$};
        }

        % edge (2 -> 3)
        \draw[myblue] plot[mark=*] coordinates {
            (0,0.25) (1,0.25) (2,0.2344) (3,0.2109) (4,0.1846)
        };

        % edge (2 -> 4)
        \draw[myorange] plot[mark=*] coordinates {
            (0,0) (1,0) (2,0) (3,0) (4,0)
        };

        % edge (2 -> 5)
        \draw[mygreen] plot[mark=*] coordinates {
            (0,0) (1,0) (2,0) (3,0) (4,0)
        };
    \end{scope}
    % vertices #3, 4 and 5
    \begin{scope}[y=1.04cm,yshift=-1.75cm]
        % grid
        \draw[lightgray, line width=0.5pt, ystep=0.5, xstep=1] (0,0) grid (4,1);
        \draw[->] (0,0) -- node[pos=1, above] {\footnotesize $t$} (4.5,0);
        \draw[->] (0,0) -- node[pos=1, right] {\footnotesize $u_{i}(t)$} (0,1.25);
        % labels
        \foreach \t in {0, ..., 4} {
            \node[below] at (\t,0) {\tiny $\t$};
        }
        \foreach \t in {1} {
            \node[left] at (0,\t) {\tiny $\t$};
        }

        % edge (3 -> 6)
        \draw[myblue] plot[mark=*] coordinates {
            (0,1) (1,0.8427) (2,0.6669) (3,0.5277) (4,0.4173)
        };

        % edge (4 -> 3)
        \draw[myorange] plot[mark=*] coordinates {
            (0,0.2876) (1,0.2049) (2,0.1460) (3,0.1040) (4,0.0741)
        };

        % edge (3 -> 3)
        \draw[mygreen] plot[mark=*] coordinates {
            (0,0.3050) (1,0.2120) (2,0.1473) (3,0.1024) (4,0.0712)
        };
    \end{scope}
\end{scope}
% legend
% vertex #1
\begin{scope}[yshift=1.75cm,node distance=0.2cm]
    \node[myblue] (l1) at (2,0.8) {\tiny $k=1$};
    \node[myorange,below of=l1] (l2) {\tiny $k=2$};
    \node[mygreen,below of=l2] (l3) {\tiny $k=3$};

    \draw (l3.south west) rectangle (l1.north east);
\end{scope}
% vertex #2
\begin{scope}[yshift=0cm,node distance=0.2cm]
    \node[myblue] (l1) at (2,0.8) {\tiny $k=1$};
    \node[myorange,below of=l1] (l2) {\tiny $k=2$};
    \node[mygreen,below of=l2] (l3) {\tiny $k=3$};

    \draw (l3.south west) rectangle (l1.north east);
\end{scope}
% vertices #3, 4, and 5
\begin{scope}[yshift=-1.75cm,node distance=0.2cm]
    \node[myblue] (l1) at (2,0.8) {\tiny $i=3$};
    \node[myorange,below of=l1] (l2) {\tiny $i=4$};
    \node[mygreen,below of=l2] (l3) {\tiny $i=5$};

    \draw (l3.south west) rectangle (l1.north east);
\end{scope}
\end{tikzpicture}
\caption{Controls $u(\cdot) \coloneqq \bar{\mu}(x_{\bar{\mu}}(\cdot,\bar{x}))$, using MPC ($\bar{\mu} = \mu_N$, left) and the admissible feedback law ($\bar{\mu} = \hat{\mu}$, right) in the example of Section~\ref{sec:examples}. The top plot shows the controls associated with vertex~$1$, the middle plot the controls associated with vertex~$2$, and the bottom plot the controls associated with the remaining vertices. In both cases, $u_{1,2}(\cdot) = u_{1,3}(\cdot)$ and $u_{2,2}(\cdot) = u_{2,3}(\cdot)$.}
\vspace*{-5mm}
\label{fig:simualtion_controls}
\end{figure}

%%%%%%%%%%%%%%%%%%%%%%%%%%%%%%%%%%%%%%%%%%%%%%%%%%%%%%%%%%%%%%%%%%%%%%%%%%%%%%%%
\section{Conclusions}
\noindent
We presented an MPC scheme without terminal conditions for {capacity-}constrained positive systems {in incidence form}, with explicit, graph-computable stability certificates and performance bounds derived from a structured suboptimal controller. The horizon bound is conservative by nature, as the same horizon is prescribed regardless of initial state, while the MPC controller used in practice performs significantly better.
Future work includes improving scalability for large networks, where sacrificing suboptimal controller performance in exchange for a linear programming formulation may be attractive. Extending the framework to more general graph dynamics is also relevant, though the admissibility argument used here does not directly carry over, and potential instability of the dynamics further complicates the synthesis of admissible control laws.
 
\bibliographystyle{IEEEtran}
\bibliography{Ref}

\end{document}